\definecolor{darkgreen}{rgb}{0, 0.5, 0}
\theoremstyle{plain}
\newtheorem{theorem}{Theorem}
\newtheorem{lemma}[theorem]{Lemma}
\newtheorem{proposition}[theorem]{Proposition}
\theoremstyle{definition}
\newtheorem{definition}[theorem]{Definition}
\newtheorem{example}[theorem]{Example}
\newtheorem{conjecture}[theorem]{Conjecture}
\theoremstyle{remark}
\newcommand{\rn}[1]{{\color{red} #1}}
\newcommand{\bl}[1]{{\color{blue} #1}}
\newcommand{\ind}{{\rm ind }}
\newcommand{\p}{{\rm part }}
\newcommand\Tableau[1]{%
  \begin{tikzpicture}[scale=0.5,draw/.append style={thick,black},baseline=4mm]
    \tableauRow=0
    \foreach \Row in {#1} {
       \tableauCol=1
       \foreach\k in \Row {
          \draw(\the\tableauCol,\the\tableauRow)+(-.5,-.5)rectangle++(.5,.5);
          \draw(\the\tableauCol,\the\tableauRow)node{\k};
          \global\advance\tableauCol by 1
       }
       \global\advance\tableauRow by -1
    }
  \end{tikzpicture}
}
\newcommand\Tabloid[1]{%
  \begin{tikzpicture}[scale=0.5,draw/.append style={thick,black},baseline=4mm]
    \tableauRow=0
    \foreach \Row in {#1} {
       \tableauCol=1
       \foreach\k in \Row {
          \draw($(\the\tableauCol,\the\tableauRow)+(-.5,-.5)$)--++(1,0);
          \draw($(\the\tableauCol,\the\tableauRow)+(-.5,.5)$)--++(1,0);
          \draw(\the\tableauCol,\the\tableauRow)node{\k};
          \global\advance\tableauCol by 1
       }
       \global\advance\tableauRow by -1
    }
  \end{tikzpicture}
}
\newcommand\ColumnTabloid[1]{%
  \begin{tikzpicture}[scale=0.5,draw/.append style={thick,black},baseline=4mm]
    \tableauRow=0
    \foreach \Row in {#1} {
       \tableauCol=1
       \foreach\k in \Row {
          \draw($(\the\tableauCol,\the\tableauRow)+(-.5,-.5)$)--++(0,1);
          \draw($(\the\tableauCol,\the\tableauRow)+(.5,-.5)$)--++(0,1);
          \draw(\the\tableauCol,\the\tableauRow)node{\k};
          \global\advance\tableauCol by 1
       }
       \global\advance\tableauRow by -1
    }
  \end{tikzpicture}
}
\title{Statistics on integer partitions arising from seaweed algebras}
\author[*]{Vincent E. Coll, Jr.}
\author[ ]{Andrew W. Mayers}
\author[*]{Nick W. Mayers}
\affil[*]{Department of Mathematics, Lehigh University, Bethlehem, PA, 18015}
\begin{document}
\maketitle


\noindent
\begin{abstract}
\noindent
Using the index theory of seaweed algebras, we explore various new integer partition statistics. We find relations to some well-known varieties of integer partitions as well as a surprising periodicity result.
\end{abstract}


\section{Introduction}

Partition statistics are often defined with an eye toward proving a congruence property.  An application of this principal can be found in the proofs of the famous congruence results of Ramanujan \textbf{\cite{raman}}, which were eventually established using the rank and crank statistics \textbf{\cite{crank,rankproof}}.  On the other hand, a partition statistic may present itself without prior appeal to an anticipated congruence property.  The recent index theory of seaweed algebras \textbf{\cite{Coll3, Coll1, Coll2, dk}} provides just such an instance.  Indeed, seaweed subalgebras of $\mathfrak{sl}(n)$ -- or simply, \textit{seaweeds} -- which are naturally defined in terms of two compositions of a single integer $n$, provide a sort of partition statistic generator.  One begins with a pair $(\lambda, \mu)$ of partitions of $n$. Since partitions are compositions, we can use $(\lambda, \mu)$ to define a seaweed subalgebra of $\mathfrak{sl}(n)$, whose index will be taken  to be the definition of the index of the partition pair $(\lambda, \mu)$. 

If we let $w(\lambda)$ denote the weight of $\lambda$, then
there are two choices of $\mu$ naturally associated with a given $\lambda$, namely, the trivial partition 
where $\mu= w(\lambda$), 
and it's conjugate $\mu^C=1^{w(\lambda)}$. Since the values of the index of $(\lambda, w(\lambda))$ and $(\lambda, 
1^{w(\lambda)})$ are reliant only on $\lambda$, the index of these partition pairs may be regarded as partition statistics on $\lambda$ alone.  Of course, the efficacy of such statistics must be adjudged according to their utility. However, we find that in each of these extremal cases, the index statistic connects to well-established  investigations. 

In the first case, $(\lambda, 1^{w(\lambda)})$, we find a connection to classical partition theory by establishing that the sequence $\{c^i_n\}_{n=1}^{\infty}$ defined by

$$
c^i_n=|\{\lambda\in\mathcal{P}(n) ~:~\ind_{1^{w(\lambda)}}(\lambda)=n-i\}|,
$$ for each fixed $i$ is eventually constant  -- converging to the number of partitions of $i-1$ into parts of two kinds. See Theorem~\ref{thm:ones}. 

In the second case, we consider seaweeds defined by a pair of compositions $(\lambda, w(\lambda))$. The enumeration of these composition pairs, when the index is zero, is of concern to Lie theorists.\footnote{Frobenius algebras form a distinguished class and have been extensively studied from the point of view of invariant theory \textbf{\cite{Ooms}} and are of special interest in deformation and quantum group theory resulting from their connection with the classical Yang-Baxter equation (see \textbf{\cite{G1}} and \textbf{\cite{G2}}).} Recent efforts to enumerate pairs of compositions that define a Frobenius (index zero) seaweed have concentrated on limiting the number of parts in the compositions. For example, Duflo (after the fashion of Coll et al \textbf{\cite{Collar}}), uses certain index-preserving operators on the set of compositions corresponding to a Frobenius seaweed subalgebra of $\mathfrak{sl}(n)$ to show that if $t$ is the number of parts in the defining compositions, then the 
number of such compositions is a rational polynomial of degree $\left[\frac{t}{2}\right]$ evaluated at $n$. See \textbf{\cite{df}}, Theorem 1.1 (b).  Dufflo's result is existential in nature.  However, if compositions are restricted to partitions and a modest limit is placed on the size of the parts -- rather than the number of parts -- the number of such compositions corresponding to a Frobenius seaweed subalgebra of $\mathfrak{sl}(n)$ becomes a periodic function of $n$. See Theorem~\ref{thm:period}.


\bigskip

The organization of the paper is as follows.  In Section 2 we develop the definitions and notation for integer partitions and seaweeds.  
In Section 3 we use the index theory of seaweeds to define the index of a partition and use this new definition to connect to some well-known classical investigations.  We conclude with some open questions.

\section{Preliminaries}\label{sec:prelim}
In Section 2.1 we review standard combinatorial notation.  In Section 2.2 we detail the recent index theory of seaweed algebras.  Throughout this article, we tacitly assume that all Lie algebras are over the complex numbers.

\subsection{Integer partitions}\label{Intger partitions}
We follow the notation of Andrews \textbf{\cite{andrews}} and adopt the following conventions.

\begin{definition}
A \textit{partition} $\lambda$ of a positive integer $n$ is a finite non-increasing sequence of positive integers $\lambda_1, \lambda_2, \dots, \lambda_m$ such that $n=\sum_{i=1}^{m}\lambda_i$.  The $\lambda_i$ are called the \textit{parts} of the partition and $w(\lambda)=n$ is the \textit{weight} of the partition.
\end{definition}

We will often employ the \textit{vector notation} for the partition
$\lambda=(\lambda_1, \lambda_2, \dots, \lambda_m)$. It will sometimes be useful to use a \textit{frequency notation} that makes explicit the number of times a particular integer occurs as a part of a partition.  So, if $\lambda=(\lambda_1, \lambda_2, \dots, \lambda_m)$, we alternatively write 

$$
\lambda = (1^{f_1}2^{f_2}3^{f_3}\cdots),
$$

\noindent
where exactly $f_i$ of the $\lambda_j$ are equal to $i$.

A graphical representation of a partition, called a \textit{Ferrers diagram}, is helpful to develop the notion of the conjugate of a partition.  More formally, the Ferrers diagram of a partition $\lambda=(\lambda_1,\hdots,\lambda_n)$ is a coordinatized  set of unit squares in the plane such that the lower left corner of each square will have integer coordinates $(i,j)$ such that $$0\ge i\ge -n+1,0\le j\le \lambda_{|i|+1}-1.$$ 
The Ferrers diagram of the partition $(4,2,1)$ is illustrated in the left-hand side of Figure 1.  The \textit{conjugate} of a partition $\mu$ is the partition $\mu^C$ resulting from exchanging the rows and columns in the Ferrers diagram associated to $\mu$.


\begin{example} The Ferrers diagram of the partition $\lambda= (4,2,1)$
and it's conjugate $\lambda^C= (3,2,1,1)$.

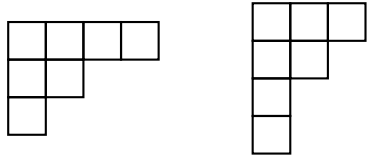
\begin{figure}[H]
$$\begin{tikzpicture}
	\node at (0,0) {\Tableau{{ , , , }, { , }, { }}};
    \node at (3, 0) {\Tableau{{ , , }, { , }, { }, { }}};
\end{tikzpicture}$$
\caption{Ferrers Diagram of $(4,2,1)$ and $(3,2,1,1)$}\label{fig:421YD}
\end{figure}

\end{example}

\subsection{Seaweed Algebras}\label{sect:swprelim}

In this section, we introduce seaweed algebras in type-A.\footnote{In \textbf{\cite{Panyushev1}}, Panyushev extended the Lie theoretic definition of seaweed algebras to the reductive case. If $\mathfrak{p}$ and $\mathfrak{p}$ are parabolic subalgebras of a reductive Lie algebra $\mathfrak{g}$ such that $\mathfrak{p}+\mathfrak{p'}=\mathfrak{g}$, then $\mathfrak{p}\cap\mathfrak{p'}$ is called a \textit{seaweed subalgebra o}f $\mathfrak{g}$ or simply $seaweed$ when $\mathfrak{g}$ is understood. For this reason, Joseph  has elsewhere 
 \textbf{\cite{Joseph}} called seaweed algebras, \textit{biparabolic}.  One can show that type-C and type-B seaweeds, in their standard representations, can be parametrized by a pair of partial compositions of $n$. See \textbf{\cite{CHM}}.}  These are seaweed subalgebras of $
\mathfrak{sl}(n)$ -- the set of all $n\times n$ matrices of trace zero. As we will see, such seaweed algebras are naturally defined in terms of two compositions of the positive integer $n$.  Recall that a \textit{composition} of $n$ is an unordered partition, which we will denote by $\lambda_1|\lambda_2|\cdots| \lambda_n$ to distinguish it from the ordered case in Definition 1, where there is an order relation on the $\lambda_i's$.

\begin{definition}
If $V$ is an $n$-dimensional vector space with a basis 
$\{e_1,\dots, e_n \}$, let $a_1|\dots|a_m$ and $b_1|\dots|b_l$ be two compositions of $n$ and consider the flags 

$$
\{0\} \subset V_1 \subset \cdots \subset V_{m-1} \subset V_m =V~~~\text{and}~~~ V=W_0\supset W_1\supset \cdots \supset W_t=\{0\}, 
$$
where $V_i=\text{span}\{e_1,\dots, e_{a_1+\cdots +a_i}\}$ and $W_j=\text{span}\{e_{b_1+\cdots +b_j+1},\dots, e_n\}$.  
\end{definition}

The subalgebra of $\mathfrak{sl}(n)$ preserving these flags is called a \textit{seaweed Lie algebra}, or simply \textit{seaweed}, and is denoted by the symbol 
$\displaystyle \frac{a_1|\cdots|a_m}{b_1|\cdots|b_t}$, which we  refer to as the \textit{type} of the seaweed.  If $b_1=n$, the seaweed is called \textit{maximal parabolic}.

\bigskip
\noindent
\textit{Remark:}  The preservation of flags in Definition 2 insures that seaweeds are closed under matrix multiplication, and therefore  define an associative algebra, hence also a Lie algebra under the commutator bracket.

\bigskip
  The evocative ``seaweed'' is descriptive of the shape of the algebra when exhibited in matrix form.   
For example, the seaweed algebra $\frac{2|4}{1|2|3}$ consists of traceless matrices of the form depicted on the left side of Figure \ref{fig:seaweed}, where * indicates the possible non-zero entries from the ground field, which we assume is the complex numbers.

\begin{figure}[H]
$$\begin{tikzpicture}[scale=0.75]
\draw (0,0) -- (0,6);
\draw (0,6) -- (6,6);
\draw (6,6) -- (6,0);
\draw (6,0) -- (0,0);
\draw [line width=3](0,6) -- (0,4);
\draw [line width=3](0,4) -- (2,4);
\draw [line width=3](2,4) -- (2,0);
\draw [line width=3](2,0) -- (6,0);

\draw [line width=3](0,6) -- (1,6);
\draw [line width=3](1,6) -- (1,5);
\draw [line width=3](1,5) -- (3,5);
\draw [line width=3](3,5) -- (3,3);
\draw [line width=3](3,3) -- (6,3);
\draw [line width=3](6,3) -- (6,0);

\draw [dotted] (0,6) -- (6,0);

\node at (.5,5.4) {{\LARGE *}};
\node at (.5,4.4) {{\LARGE *}};
\node at (1.5,4.4) {{\LARGE *}};
\node at (2.5,4.4) {{\LARGE *}};
\node at (2.5,3.4) {{\LARGE *}};
\node at (2.5,2.4) {{\LARGE *}};
\node at (2.5,1.4) {{\LARGE *}};
\node at (2.5,0.4) {{\LARGE *}};
\node at (3.5,2.4) {{\LARGE *}};
\node at (3.5,1.4) {{\LARGE *}};
\node at (3.5,0.4) {{\LARGE *}};
\node at (4.5,2.4) {{\LARGE *}};
\node at (4.5,1.4) {{\LARGE *}};
\node at (5.5,2.4) {{\LARGE *}};
\node at (5.5,1.4) {{\LARGE *}};
\node at (4.5,0.4) {{\LARGE *}};
\node at (5.5,0.4) {{\LARGE *}};

\node at (.5,6.4) {1};
\node at (2,5.4) {2};
\node at (4.5,3.4) {3};
\node at (-0.5,4.9) {2};
\node at (1.5,1.9) {4};

\end{tikzpicture}\hspace{1.5cm}\begin{tikzpicture}[scale=1.3]
	\def\Node{\node [circle,  fill, inner sep=2pt]}
	\node at (0,0) {};
    \Node[label=left:$v_1$] (1) at (0,1.8) {};
	\Node[label=left:$v_2$] (2) at (1,1.8) {};
	\Node[label=left:$v_3$] (3) at (2,1.8) {};
	\Node[label=left:$v_4$] (4) at (3,1.8) {};
	\Node[label=left:$v_5$] (5) at (4,1.8) {};
	\Node[label=left:$v_6$] (6) at (5,1.8) {};
	\draw (1) to[bend left=50] (2);
	\draw (3) to[bend left=50] (6);
	\draw (4) to[bend left=50] (5);
	\draw (2) to[bend right=50] (3);
	\draw (4) to[bend right=50] (6);
\end{tikzpicture}$$
\caption{A seaweed of type $\frac{2|4}{1|2|3}$ and its associated meander}
\label{fig:seaweed}
\end{figure}
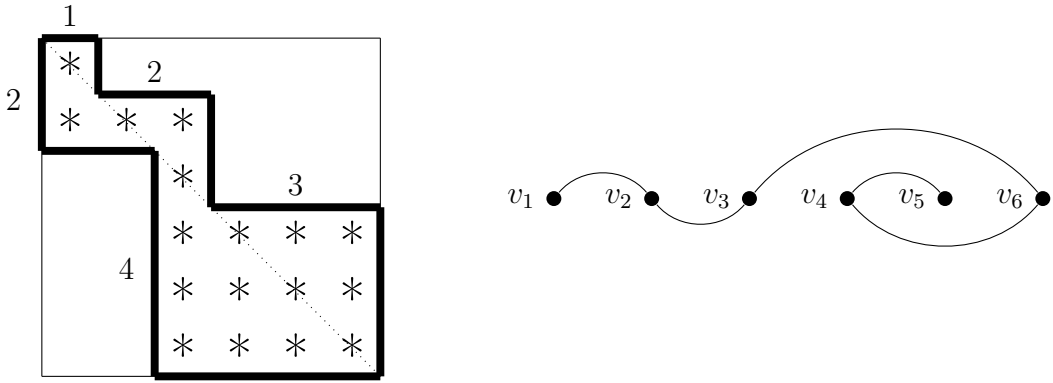

The \textit{index} of a Lie algebra  was introduced by Dixmier  \textbf{\cite{Dix}}.   Formally, the index of a Lie algebra $\mathfrak{g}$ is defined by 

\[\ind (\mathfrak{g})=\min_{f\in \mathfrak{g^*}} \dim  (\ker (B_f)),\]

\noindent where $f$ is a linear form on $\mathfrak{g}$ and $B_f$ is the associated skew-symmetric \textit{Kirillov form} defined by $B_f(x,y)=f([x,y])$ for all $x,y\in\mathfrak{g}$.  The index is an important algebraic invariant of the Lie algebra -- though notoriously difficult to compute.  However, in \textbf{\cite{dk}}, Dergachev and A. Kirillov developed a combinatorial algorithm to compute the index of a seaweed subalgebra of $\mathfrak{sl}(n)$ by counting the number of  connected components of a certain planar graph, called a meander, associated to the seaweed.  To construct a meander, let  
$\frac{ a_1|\cdots|a_m}{b_1|\cdots|b_t}$ be a seaweed.  Now 
label the $n$ vertices of our meander as $v_1, v_2, \dots , v_n$ from left to right along a horizontal line. We then place edges above the horizontal line, called top edges, according to $a_1+\hdots+ a_m$ as follows. 
Partition the set of vertices into a set partition by grouping together the first $a_1$ vertices, then the next $a_2$ vertices, and so on, lastly grouping together the final $a_m$ vertices. We call each set within a set partition a \textit{block}. For each block in the set partition determined by $a_1+\hdots + a_m$, add an edge from the first vertex of the block to the last vertex of the block, then add an edge between the second vertex of the block and the second to last vertex of the block, and so on within each block. More explicitly, given vertices $v_j,v_k$ in a block of 
size $a_i$, there is an edge between them if and only if 
$j+k=2(a_1+a_2+\dots+a_{i-1})+a_i+1$.
In the same way, place bottom edges below the horizontal line of vertices according to the blocks in the partition determined by $b_1+\hdots + b_t$. See the right side of Figure \ref{fig:seaweed}.

Every meander consists of a disjoint union of cycles and paths.  The main result of \textbf{\cite{dk}} is that the index of a seaweed can be computed by counting the number and type of these components in it's associated meander. 

\begin{theorem}\label{thm:dk}  \rm{(Dergachev and A. Kirillov, \textbf{\cite{dk}})} 
If $\mathfrak{p}$ is a seaweed subalgebra of $\mathfrak{sl}(n)$, then
$$\ind (\mathfrak{p}) =2C + P -1,$$
where $C$ is the number of cycles and $P$ is the number of paths in the associated meander.
\end{theorem}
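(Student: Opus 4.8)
The plan is to reduce the statement to a concrete linear-algebra problem and then read off the answer from the combinatorics of the meander. Because the identity matrix $I$ lies in every parabolic, the $\mathfrak{gl}(n)$-seaweed $\tilde{\mathfrak{p}}$ attached to the same pair of compositions decomposes as $\tilde{\mathfrak{p}}=\mathfrak{p}\oplus\mathbb{C}I$ with $I$ central; since the index is additive over direct sums of ideals and $\ind(\mathbb{C}I)=1$, we get $\ind(\tilde{\mathfrak{p}})=\ind(\mathfrak{p})+1$. Hence it suffices to prove that $\ind(\tilde{\mathfrak{p}})=2C+P$, and I would work in $\mathfrak{gl}(n)$ throughout, using the standard reformulation $\ind(\tilde{\mathfrak{p}})=\dim\tilde{\mathfrak{p}}-\max_{f}\operatorname{rank}B_f$, so that the goal becomes computing the generic dimension of $\ker B_f$.

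First I would fix the natural basis of matrix units $E_{pq}$ indexed by the ``staircase'' support $S\subseteq\{1,\dots,n\}^2$ of the seaweed, and represent a functional $f$ by a matrix $A$ via $f(Y)=\operatorname{tr}(AY)$. A direct computation using $[E_{pq},E_{rs}]=\delta_{qr}E_{ps}-\delta_{sp}E_{rq}$ gives, for $X=\sum_{(p,q)\in S}x_{pq}E_{pq}$,
\[
f([X,E_{rs}])=\big([A,X]\big)_{sr},
\]
so that $X\in\ker B_f$ precisely when the commutator $[A,X]$ vanishes at every position $(s,r)$ with $(r,s)\in S$. Thus $\ker B_f$ is the solution space of an explicit homogeneous linear system in the unknowns $x_{pq}$ whose coefficients are entries of $A$, and the index is governed by the generic rank of this system.

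The heart of the argument is to exploit that $\dim\ker B_f$ is constant along coadjoint orbits (it equals $\dim\tilde{\mathfrak{p}}$ minus the orbit dimension), which lets me move a generic $f$ to a normal form whose matrix $A$ is supported on the positions corresponding to the edges of the meander together with the diagonal. The key combinatorial input is that each vertex carries at most one top edge and one bottom edge, so the meander is a disjoint union of paths and cycles and the commutator equations couple the unknowns only along a single connected component at a time. I would then solve the system one component at a time: walking along a path, each equation expresses one unknown in terms of the previous, so the value at a free end propagates and the component contributes a $1$-dimensional space of solutions; for a cycle the same propagation returns to its starting point, and for generic $A$ the resulting consistency condition around the loop is met by a two-parameter family, so each cycle contributes a $2$-dimensional space. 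Summing over components yields $\dim\ker B_f=P+2C$, whence $\ind(\tilde{\mathfrak{p}})=2C+P$ and, by the first paragraph, $\ind(\mathfrak{p})=2C+P-1$.

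The main obstacle is exactly this component-by-component analysis: one must set up the propagation carefully using the precise staircase shape of $S$ so that every position entering each commutator equation is accounted for, verify the sharp contributions $1$ for a path and $2$ for a cycle (the cycle count hinging on the consistency computation around the loop), and confirm that the chosen normal form is genuinely generic, i.e. that it minimizes $\dim\ker B_f$ rather than merely bounding it. An appealing alternative, which I would keep in reserve, is to avoid the global linear algebra by induction on $n$: one introduces index-preserving reduction moves that simplify the meander -- deleting an isolated vertex or unwinding an outer arc -- while tracking the induced change in $C$ and $P$, and reduces to trivial base cases. Either route delivers $\ind(\tilde{\mathfrak{p}})=2C+P$ and hence the claimed formula for $\mathfrak{p}$.
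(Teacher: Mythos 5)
The paper offers no proof of Theorem~\ref{thm:dk}: it is quoted verbatim from Dergachev and Kirillov, so your proposal can only be measured against the known arguments rather than an internal one. Your setup is sound as far as it goes: the splitting $\tilde{\mathfrak{p}}=\mathfrak{p}\oplus\mathbb{C}I$ of the $\mathfrak{gl}(n)$-seaweed, the additivity of the index giving $\ind(\tilde{\mathfrak{p}})=\ind(\mathfrak{p})+1$, the identity $f([X,E_{rs}])=([A,X])_{sr}$, and the idea of computing $\ker B_f$ component-by-component on the meander are all correct and in the spirit of the original proof.

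The genuine gap is the one you flag but do not close. Since $\ind(\tilde{\mathfrak{p}})=\min_f\dim\ker B_f$, evaluating $\dim\ker B_f$ at one well-chosen $f$ (your ``normal form'') proves only the upper bound $\ind(\tilde{\mathfrak{p}})\le 2C+P$. Your attempt to upgrade this to equality rests on the claim that a \emph{generic} $f$ can be moved along its coadjoint orbit to a functional supported on the meander edges and the diagonal; orbit-invariance of $\dim\ker B_f$ does not give you this, and it is not true that a generic orbit must meet that normal-form locus. What is actually needed is a matching lower bound $\ind(\tilde{\mathfrak{p}})\ge 2C+P$, which in Dergachev--Kirillov is the harder half of the theorem and requires a separate argument (producing $2C+P$ independent invariants of the coadjoint action, equivalently bounding the generic rank of $B_f$). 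Relatedly, the assertion that each cycle contributes a $2$-dimensional solution space is only a heuristic: naive propagation of a single unknown around a loop yields dimension $1$ or $0$ depending on a monodromy condition, so the source of the second dimension must be exhibited explicitly. Finally, your fallback route via index-preserving reduction moves is essentially Lemma~\ref{lem:wd} of this paper, but the index-preservation of those moves is itself ordinarily established using the meander formula, so that route risks circularity unless the moves are justified independently.
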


\noindent

\noindent
\begin{example} 
In the example of Figure 1, the meander associated to the seaweed $\frac{2|4}{1|2|3}$ has no cycles and consists of a single path -- so, has index zero, hence is Frobenius.  
\end{example}

\bigskip
While Theorem \ref{thm:dk} is an elegant combinatorial result it is difficult to apply in practice.  However, Coll et al in \textbf{\cite{Collar}} show that any meander can be contracted or ``wound-down" to the empty meander through a sequence of graph-theoretic moves, each of which is uniquely determined by the structure of the meander at the time of move application.  There are five such moves, only one of which affects the component structure of the meander graph and is therefore the only move capable of modifying the index of the meander.  Using these winding-down moves the authors in \textbf{\cite{Collar}} established the following index formulas which allow us to ascertain the index directly from the block sizes of the flags that define the seaweed.\footnote{A recent result by Karnauhova and Liebscher \textbf{\cite{Kar}} has established, in particular, that the formulas in Theorems \ref{2parts} and \ref{3parts} are the only nontrivial linear ones that are available in the maximal parabolic case. More specifically, If $m\geq 4$ and $\mathfrak{p}$ is a seaweed of type $\dfrac{a_1|a_2|\cdots|a_m}{n}$, then there do not exist homogeneous polynomials $f_1,f_2 \in \mathbb{Z}[x_1,...,x_m]$, of arbitrary degree, such that the index of $\mathfrak{p}$ is given by $\gcd (f_1(a_1,...,a_m),f_2(a_1,...,a_m))$.}

\begin{theorem}[Theorem 7, \textbf{\cite{Coll2}}]\label{2parts}
A seaweed of type $\dfrac{a|b}{n}$ has index $\gcd (a,b)-1.$
\end{theorem}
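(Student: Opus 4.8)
The plan is to reduce everything to the combinatorial index formula of Dergachev and A.~Kirillov (Theorem~\ref{thm:dk}) and then to run the subtractive Euclidean algorithm on the pair $(a,b)$ at the level of meanders. Write $M_{a,b}$ for the meander of $\frac{a|b}{a+b}$ and put $n=a+b$. By Theorem~\ref{thm:dk} it suffices to show that $2C+P=\gcd(a,b)$, where $C$ and $P$ count the cycles and paths of $M_{a,b}$. Since the single bottom block induces the matching $i\leftrightarrow n+1-i$, which is symmetric about the center of the vertex line, reflecting the meander left-to-right is a graph isomorphism that interchanges the two top blocks; hence $M_{a,b}\cong M_{b,a}$, and we may assume $a\ge b$.

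The heart of the argument is the reduction lemma: for $a\ge b\ge 1$,
$$\ind\!\left(\frac{a\,|\,b}{a+b}\right)=\ind\!\left(\frac{(a-b)\,|\,b}{a}\right),$$
where a part equal to $0$ is simply dropped. I would establish this by contracting the bottom matching of $M_{a,b}$: collapse each bottom edge $\{i,n+1-i\}$ to a single super-vertex. Because every vertex meets exactly one bottom edge (the center being fixed when $n$ is odd) and at most one top edge, the resulting graph has maximum degree two, and because each meander component alternates bottom and top edges, contracting the bottom matching preserves both the number of cycles and the number of paths. Tracking the images of the top edges, the second-block edges $\{a+i,\,n+1-i\}$ reproduce, on the super-vertices indexed by $1,\dots,b$, exactly the matching pattern of a block of size $b$, while the first-block edges $\{i,\,a+1-i\}$ either become ``shift-by-$b$'' edges $\{i,\,b+i\}$ or are left unchanged, according to the sign of $a+1-b-2i$. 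Comparing the components so produced with those of $M_{a-b,b}$ shows that the two meanders share the same value of $2C+P$; equivalently, the step $(a,b)\mapsto(a-b,b)$ is realized by the index-preserving winding-down moves of \textbf{\cite{Collar}}. Carrying out this bookkeeping—in particular, checking that no cycle is spuriously created or destroyed—is the main obstacle and the only genuinely technical step.

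With the reduction lemma in hand, the subtractive Euclidean algorithm on $(a,b)$ (repeatedly replace the larger part by its difference with the smaller, reordering via $M_{a,b}\cong M_{b,a}$ whenever needed) terminates at the balanced pair $(d,d)$ with $d=\gcd(a,b)$ and, after one further step empties a block, at the seaweed $\frac{d}{d}$. For this terminal meander the top and bottom matchings coincide—both are $i\leftrightarrow d+1-i$—so it is a disjoint union of $\lfloor d/2\rfloor$ doubled edges (each a cycle) together with one isolated center vertex (a path of length zero) precisely when $d$ is odd. Hence $C=\lfloor d/2\rfloor$ and $P=d-2\lfloor d/2\rfloor$, giving $2C+P=d$.

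Combining the three ingredients, the quantity $2C+P$ is invariant along the entire reduction and therefore equals its value $d$ at $\frac{d}{d}$. Applying Theorem~\ref{thm:dk} one last time yields
$$\ind\!\left(\frac{a\,|\,b}{a+b}\right)=2C+P-1=d-1=\gcd(a,b)-1,$$
as claimed. The base case and the assembly via Theorem~\ref{thm:dk} are routine; the reduction lemma is where the real care is required.
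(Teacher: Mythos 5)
Your argument is correct: the reduction $(a,b)\mapsto(a-b,b)$ is exactly what the moves $F_v$ followed by $R$ (or $B$ when $a=b$) of Lemma~\ref{lem:wd} accomplish on the meander of $\frac{a|b}{a+b}$, and your component count for the terminal meander $\frac{d}{d}$ correctly gives $2C+P=d$, hence index $d-1$ via Theorem~\ref{thm:dk}. The paper itself does not prove this statement -- it imports it as Theorem 7 of \textbf{\cite{Coll2}} -- but it attributes the result to precisely this winding-down (Euclidean-algorithm) mechanism, so your route coincides with the intended one; the only caveat is that your hands-on ``contract the bottom matching'' justification is superfluous once you invoke the index-preserving moves of Lemma~\ref{lem:wd}.
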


\begin{theorem}[Theorem 8,\textbf{ \cite{Coll2}}]\label{3parts}
A seaweed of type $\dfrac{a|b|c}{n}$, or of type $\dfrac{a|b}{c|n-c}$, has index given by $\gcd(a+b,b+c)-1.$
\end{theorem}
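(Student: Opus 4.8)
The plan is to recast the Dergachev--Kirillov count (Theorem~\ref{thm:dk}) as a statement about a single permutation and then extract the gcd by a Euclidean descent. Model the meander of a seaweed on $n$ vertices by two involutions of $\{1,\dots,n\}$: the top involution $\tau$, which reverses each top block, and the bottom involution $\beta$, which reverses each bottom block. The meander edges are exactly the nontrivial pairs $\{i,\tau(i)\}$ and $\{i,\beta(i)\}$, so the connected components are the orbits of $\langle\tau,\beta\rangle$. Set $\varphi=\beta\tau$.

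First I would prove a reformulation lemma: $\ind(\mathfrak{p})=(\text{number of cycles of }\varphi)-1$. Since $\langle\tau,\beta\rangle$ is dihedral, the orbit of a vertex $x$ is the union of the $\varphi$-cycle of $x$ and that of $\tau(x)$. A path component (one containing a vertex fixed by $\tau$ or $\beta$, i.e.\ a degree-$\le 1$ endpoint) consists of a single $\varphi$-cycle, whereas a cycle component is the union of two $\varphi$-cycles interchanged by $\tau$. Hence the number of cycles of $\varphi$ equals $2C+P$, and Theorem~\ref{thm:dk} gives the claim. As a sanity check, this already recovers Theorem~\ref{2parts}: for $\frac{a|b}{n}$ one computes that $\varphi$ is the rotation $i\mapsto i+b$ on $\mathbb{Z}/(a+b)$, which has $\gcd(a,b)$ cycles.

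Next, for $\frac{a|b|c}{n}$ I would compute $\varphi$ directly from the edge rule: it is the piecewise translation sending $i\mapsto i+(b+c)$ on $[1,a]$, sending $i\mapsto i+(c-a)$ on $[a+1,a+b]$, and sending $i\mapsto i-(a+b)$ on $[a+b+1,n]$ --- that is, the interval exchange that reverses the order of the three blocks while translating each rigidly. Writing $g=\gcd(a+b,b+c)$, each of the three displacements is divisible by $g$ (for the middle one, note $g\mid (b+c)-(a+b)=c-a$), so $\varphi$ preserves residues modulo $g$; since $n=a+b+c\ge a+b\ge g$, all $g$ residue classes are nonempty and $\varphi$ has at least $g$ cycles. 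The type $\frac{a|b}{c|n-c}$ is handled the same way, with the analogous computation of $\varphi$ and the same modulus $g=\gcd(a+b,b+c)=\gcd(n,b+c)$.

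The main obstacle is the reverse inequality: showing that $\varphi$ is transitive on each residue class, so that it has exactly $g$ cycles. The examples show $\varphi$ does not act as a plain rotation inside a class, so I expect to argue by a Euclidean (Rauzy-type) descent on the block data. Using the left--right reversal symmetry $\frac{a|b|c}{n}\cong\frac{c|b|a}{n}$ to assume $a\le c$, I would pass to the first-return map of $\varphi$ on a suitably chosen subinterval; this is again an interval exchange of the same shape, but with the pair $(a+b,b+c)$ replaced by a strictly smaller pair realizing one step of the Euclidean algorithm, and having the same number of cycles as $\varphi$. Iterating collapses the block structure to the two-block (rotation) case already treated, whose cycle count is visibly $\gcd(a+b,b+c)$; feeding this into the reformulation lemma yields $\ind=\gcd(a+b,b+c)-1$. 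Verifying that the first-return step preserves the cycle count exactly, and controlling the parity and fixed-point bookkeeping at each reduction, is where the real work lies.
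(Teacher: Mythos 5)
Your reformulation lemma is correct and is a genuinely nice way to package Theorem~\ref{thm:dk}: with $\tau$ and $\beta$ the block-reversing involutions and $\varphi=\beta\tau$, a cycle component of the meander splits into two $\varphi$-cycles and a path component into one, so $\#\mathrm{cycles}(\varphi)=2C+P$ and $\ind(\mathfrak{p})=\#\mathrm{cycles}(\varphi)-1$. Your computation of $\varphi$ for $\frac{a|b|c}{n}$ (displacements $b+c$, $c-a$, $-(a+b)$) is also right, and the divisibility observation cleanly gives the lower bound $\ind\ge\gcd(a+b,b+c)-1$, which is not how the literature cited here argues. However, the theorem is an equality, and the entire content of the upper bound --- that $\varphi$ is transitive on each residue class mod $g$, i.e.\ has \emph{exactly} $g$ cycles --- is deferred to a sketched Euclidean descent. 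That is the whole difficulty, and as written the sketch has two unjustified claims. First, the first-return map of $\varphi$ to a subinterval $S$ has the same number of cycles as $\varphi$ only if every $\varphi$-cycle meets $S$; this must be proved for whatever $S$ you choose at each stage (it can fail for a careless choice, e.g.\ when $a=c$ the middle block consists entirely of fixed points that never visit the outer blocks). Second, the induced map is generally \emph{not} ``an interval exchange of the same shape'': inducing a three-block reversal typically produces an order-reversing interval exchange with a different number of blocks on top and bottom, so the induction must be run in the larger family of all order-reversing exchanges (equivalently, all seaweeds $\frac{a_1|\cdots|a_m}{b_1|\cdots|b_t}$), and you then need to track which pair of integers carries the gcd through each step.

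Once those repairs are made, what you are rebuilding is precisely the winding-down machinery of Lemma~\ref{lem:wd}: the moves $R$, $B$, $P$ are exactly the index-preserving ``first-return'' reductions on the block data, and the source cited for Theorem~\ref{3parts} establishes the formula by iterating them until the two-block case of Theorem~\ref{2parts} is reached. So the proposal is not wrong in direction, but it currently proves only one inequality; the decisive step is named rather than done, and the version of it you describe needs correction before it closes.
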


Since we will need the explicit winding-down moves in the proof of Theorem~\ref{thm:period} we review the winding-down process.

\begin{lemma}[Winding-down]\label{lem:wd} Given a meander $M$ of type $\dfrac{a_1|a_2|...|a_m}{b_1|b_2|...|b_t}$, create a meander $M'$
by exactly one of the following moves. For all moves except the 
Component Elimination move, $M$ and $M'$ have the same index. 
\begin{enumerate}
\item {\bf Vertical Flip $(F_v)$:} If $a_1<b_1$, then 
$M'$ has type 
$\displaystyle\frac{b_1|b_2|...|b_t}{a_1|a_2|...|a_m}$.
    
\item {\bf Component Elimination $(C(c))$:} 
If $a_1=b_1=c$, then $M'$ has type 
$\displaystyle\frac{a_2|a_3|...|a_m}{b_2|b_3|...|b_t}.$

\item {\bf Rotation Contraction $(R)$:} If $b_1<a_1<2b_1$, then 
$M'$ has type 
$\displaystyle \frac{b_1|a_2|a_3|...|a_m}{(2b_1-a_1)|b_2|...|b_t}$.

\item {\bf Block Elimination $(B)$:} If $a_1=2b_1$, then 
$M'$ has type 
$\displaystyle\frac{b_1|a_2|..|a_m}{b_2|b_3|...|b_t}$.

\item {\bf Pure Contraction $(P)$:} If $a_1>2b_1$, then 
$M'$ has type 
$\displaystyle
\frac{(a_1-2b_1)|b_1|a_2|a_3|...|a_m}{b_2|b_3|...|b_t}$.

\end{enumerate}
\end{lemma}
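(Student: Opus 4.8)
The plan is to reduce everything to Theorem~\ref{thm:dk}. Since $\ind(\mathfrak{p})=2C+P-1$, where $C$ and $P$ count the cycles and paths of the meander, it suffices to prove that each move except Component Elimination leaves the pair $(C,P)$ unchanged (and, for $C(c)$, to identify exactly how $(C,P)$ drops). The structural facts I would record first are that every vertex of a meander has at most one top and at most one bottom edge, hence degree at most two, so each component is genuinely a path or a cycle; and that all five moves leave the arcs not meeting the first top block $\{v_1,\dots,v_{a_1}\}$ and first bottom block $\{v_1,\dots,v_{b_1}\}$ unchanged, up to the relabeling of the surviving vertices. Thus the entire argument localizes to the left end of the meander.

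The two bookkeeping moves are immediate. For the Vertical Flip, reflecting across the horizontal axis sends top edges to bottom edges and conversely, but as an abstract graph on the same vertex set $M$ and $M'$ have the identical edge set; hence $(C,P)$ is unchanged. For Component Elimination, when $a_1=b_1=c$ each vertex $v_j$ with $j\le c$ is joined to $v_{c+1-j}$ by \emph{both} a top and a bottom edge, so the first $c$ vertices split off as $\lfloor c/2\rfloor$ two-cycles together with, when $c$ is odd, the single fixed vertex $v_{(c+1)/2}$ forming a trivial path. Deleting these closed-off components is exactly the move, and it is the only one that disturbs $(C,P)$ — which is why it is singled out.

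For the three contraction moves I may assume $a_1\ge b_1$, since otherwise a Vertical Flip applies. The engine is that the first bottom block realizes the reflection $\sigma_b\colon j\mapsto b_1+1-j$ on $\{1,\dots,b_1\}$ and the first top block the reflection $\sigma_a\colon j\mapsto a_1+1-j$ on $\{1,\dots,a_1\}$, so that following arcs through the left region alternates $\sigma_a$ and $\sigma_b$; since a product of two reflections is a translation, each maximal walk inside the left region enters and leaves at a controlled pair of boundary vertices, inducing a nested pairing on them (with fixed points of $\sigma_a,\sigma_b$ producing path endpoints). I would carry out Pure Contraction ($a_1>2b_1$) first, where the picture is cleanest: deleting the leftmost $b_1$ vertices short-circuits the range $\{a_1-b_1+1,\dots,a_1\}$ into nested pairs $v\leftrightarrow v'$ with $v+v'=2a_1-b_1+1$, and these are \emph{precisely} the top arcs created by the move's new size-$b_1$ block, while the leftover top arcs on $\{b_1+1,\dots,a_1-b_1\}$ match the new size-$(a_1-2b_1)$ block and all surviving bottom arcs are untouched; since no component of $M$ lives entirely in the deleted region, one gets a type-preserving bijection of components. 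Block Elimination ($a_1=2b_1$) is the degenerate case in which the middle new block vanishes, and is handled the same way.

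The main obstacle is Rotation Contraction, where $b_1<a_1<2b_1$ forces the two families of block-$1$ arcs to interleave: now the range $\{a_1-b_1+1,\dots,a_1\}$ overlaps $\{1,\dots,b_1\}$, so the translation $\sigma_a\sigma_b$ can thread through several left vertices before a walk exits, and the deleted set of $a_1-b_1$ vertices is no longer simply an initial segment. The crux is to show that the resulting boundary pairing is exactly the nested pairing produced by the move's new top block of size $b_1$ and bottom block of size $2b_1-a_1$, and — the point where errors are easiest to make — that the folding neither closes up a spurious cycle nor breaks an existing one, together with the parity cases coming from the central fixed vertices of odd blocks. Establishing that this left-end surgery is a homeomorphism of the underlying $1$-complex away from the deleted open region, so that paths map to paths and cycles to cycles, is what completes the proof.
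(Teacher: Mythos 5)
First, a point of comparison: the paper itself does not prove Lemma~\ref{lem:wd}; it is stated as a review of results from \cite{Collar}, so there is no in-paper argument to measure you against. Judged on its own terms, your proposal correctly reduces everything to tracking the pair $(C,P)$ via Theorem~\ref{thm:dk}, and four of the five moves are handled soundly. The Vertical Flip is a reflection, hence leaves the abstract graph unchanged; for Component Elimination your identification of the first $c$ vertices as $\lfloor c/2\rfloor$ two-cycles plus (for odd $c$) one isolated-vertex path is exactly right, and explains why this is the one move that changes the index. Your local computation for Pure Contraction is also correct: a deleted vertex $v_j$ ($j\le b_1$) links $v_{a_1+1-j}$ through $v_{b_1+1-j}$ to $v_{a_1-b_1+j}$, giving the nested pairing $v+v'=2a_1-b_1+1$ on $\{v_{a_1-b_1+1},\dots,v_{a_1}\}$, which is precisely the new block of size $b_1$, with the residual arcs on $\{v_{b_1+1},\dots,v_{a_1-b_1}\}$ forming the new block of size $a_1-2b_1$; Block Elimination is the degenerate instance, as you say.

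The gap is the Rotation Contraction case, which you correctly single out as the crux but then do not carry out: your final sentence states that establishing the left-end surgery is a homeomorphism ``is what completes the proof,'' which is an announcement of the missing step rather than the step itself. Since $b_1<a_1<2b_1$, the argument is genuinely different from Pure Contraction --- the contracted graph is \emph{not} equal to the new meander under the identity labeling (one can check this already on $\frac{17|3}{10|4|6}\mapsto\frac{10|3}{3|4|6}$ from Figure~\ref{fig:signature}), so asserting that the boundary pairing ``is exactly'' the nested pairing of the blocks $b_1$ and $2b_1-a_1$ requires proof. A clean way to close it: contract the $a_1-b_1$ top arcs joining the protruding vertices $\{v_{b_1+1},\dots,v_{a_1}\}$ to $\{v_1,\dots,v_{a_1-b_1}\}$ (contraction of a non-loop edge in a graph of maximum degree two preserves the path/cycle type of its component), and then verify that the resulting graph becomes the meander of type $\frac{b_1|a_2|\cdots|a_m}{(2b_1-a_1)|b_2|\cdots|b_t}$ after reversing the labels of $v_1,\dots,v_{b_1}$; this relabeling is the ``rotation'' and is where the new bottom block of size $2b_1-a_1$ and the survival of the original bottom arcs as the new top block of size $b_1$ actually get checked. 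Without some such explicit verification, the hardest fifth of the lemma remains unproved.
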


\noindent
\begin{example} In this example, the seaweed $\frac{17|3}{10|4|6}$ is wound-down to the empty meander using the moves detailed in Lemma~\ref{lem:wd}.

\begin{figure}[H]
$$\begin{tikzpicture}[scale=.4]
\def\Node{\node [circle, fill, inner sep=2pt]}
\node at (10.5,-2){$\frac{17|3}{10|4|6}$};
\Node (1) at (1,0){};
\Node (2) at (2,0){};
\Node (3) at (3,0){};
\Node (4) at (4,0){};
\Node (5) at (5,0){};
\Node (6) at (6,0){};
\Node (7) at (7,0){};
\Node (8) at (8,0){};
\Node (9) at (9,0){};
\Node (10) at (10,0){};
\Node (11) at (11,0){};
\Node (12) at (12,0){};
\Node (13) at (13,0){};
\Node (14) at (14,0){};
\Node (15) at (15,0){};
\Node (16) at (16,0){};
\Node (17) at (17,0){};
\Node (18) at (18,0){};
\Node (19) at (19,0){};
\Node (20) at (20,0){};
\draw (1) to[bend left] (17);
\draw (2) to[bend left] (16);
\draw (3) to[bend left] (15);
\draw (4) to[bend left] (14);
\draw (5) to[bend left] (13);
\draw (6) to[bend left] (12);
\draw (7) to[bend left] (11);
\draw (8) to[bend left] (10);
\draw (18) to[bend left] (20);
\draw (1) to[bend right] (10);
\draw (2) to[bend right] (9);
\draw (3) to[bend right] (8);
\draw (4) to[bend right] (7);
\draw (5) to[bend right] (6);
\draw (11) to[bend right] (14);
\draw (12) to[bend right] (13);
\draw (15) to[bend right] (20);
\draw (16) to[bend right] (19);
\draw (17) to[bend right] (18);
\end{tikzpicture}
\hspace{1em}
\begin{tikzpicture}[scale=.4]
\def\Node{\node [circle, fill, inner sep=2pt]}
\node at (-2,0){$\overset{R}{\mapsto}$};
\node at (6,-2){$\frac{10|3}{3|4|6}$};
\Node (1) at (1,0){};
\Node (2) at (2,0){};
\Node (3) at (3,0){};
\Node (4) at (4,0){};
\Node (5) at (5,0){};
\Node (6) at (6,0){};
\Node (7) at (7,0){};
\Node (8) at (8,0){};
\Node (9) at (9,0){};
\Node (10) at (10,0){};
\Node (11) at (11,0){};
\Node (12) at (12,0){};
\Node (13) at (13,0){};
\draw (1) to[bend left] (10);
\draw (2) to[bend left] (9);
\draw (3) to[bend left](8);
\draw (4) to[bend left](7);
\draw (5) to[bend left](6);
\draw (11) to[bend left](13);
\draw (1) to[bend right](3);
\draw (4) to[bend right](7);
\draw (5) to[bend right](6);
\draw (8) to[bend right](13);
\draw (9) to[bend right](12);
\draw (10) to[bend right](11);
\end{tikzpicture}$$
$$\begin{tikzpicture}[scale=.4]
\def\Node{\node [circle, fill, inner sep=2pt]}
\node at (-1,0){$\overset{P}{\mapsto}$};
\node at (6,-2){$\frac{4|3|3}{4|6}$};
\Node (1) at (1,0){};
\Node (2) at (2,0){};
\Node (3) at (3,0){};
\Node (4) at (4,0){};
\Node (5) at (5,0){};
\Node (6) at (6,0){};
\Node (7) at (7,0){};
\Node (8) at (8,0){};
\Node (9) at (9,0){};
\Node (10) at (10,0){};
\draw (1) to[bend left] (4);
\draw (2) to[bend left](3);
\draw (5) to[bend left](7);
\draw (8) to[bend left](10);
\draw (1) to[bend right](4);
\draw (2) to[bend right](3);
\draw (5) to[bend right](10);
\draw (6) to[bend right](9);
\draw (7) to[bend right](8);
\end{tikzpicture}
\hspace{1em}
\begin{tikzpicture}[scale=.4]
\def\Node{\node [circle, fill, inner sep=2pt]}
\node at (-1,0){$\overset{C(4)}{\mapsto}$};
\node at (3.5,-2){$\frac{3|3}{6}$};
\Node (1) at (1,0){};
\Node (2) at (2,0){};
\Node (3) at (3,0){};
\Node (4) at (4,0){};
\Node (5) at (5,0){};
\Node (6) at (6,0){};
\draw (1) to[bend left](3);
\draw (4) to[bend left](6);
\draw (1) to[bend right](6);
\draw (2) to[bend right](5);
\draw (3) to[bend right](4);
\end{tikzpicture}
\hspace{1em}
\begin{tikzpicture}[scale=.4]
\def\Node{\node [circle, fill, inner sep=2pt]}
\node at (-1,0){$\overset{F_v}{\mapsto}$};
\node at (3.5,-2){$\frac{6}{3|3}$};
\Node (1) at (1,0){};
\Node (2) at (2,0){};
\Node (3) at (3,0){};
\Node (4) at (4,0){};
\Node (5) at (5,0){};
\Node (6) at (6,0){};
\draw (1) to[bend right](3);
\draw (4) to[bend right](6);
\draw (1) to[bend left](6);
\draw (2) to[bend left](5);
\draw (3) to[bend left](4);
\end{tikzpicture}
\hspace{1em}
\begin{tikzpicture}[scale=.4]
\def\Node{\node [circle, fill, inner sep=2pt]}
\node at (-1,0){$\overset{B}{\mapsto}$};
\node at (2,-2){$\frac{3}{3}$};
\Node (1) at (1,0){};
\Node (2) at (2,0){};
\Node (3) at (3,0){};
\draw (1) to[bend right](3);
\draw (1) to[bend left](3);
\end{tikzpicture}
\hspace{1em}
\begin{tikzpicture}[scale=.4]
\node at (-1,0){$\overset{C(3)}{\mapsto}$};
\node at (1,-2){$\emptyset$};
\node at (1,0){$\emptyset$};
\end{tikzpicture}$$
\caption{
Winding down the meander $\dfrac{17|3}{10|4|6}$}
\label{fig:signature}
\end{figure}
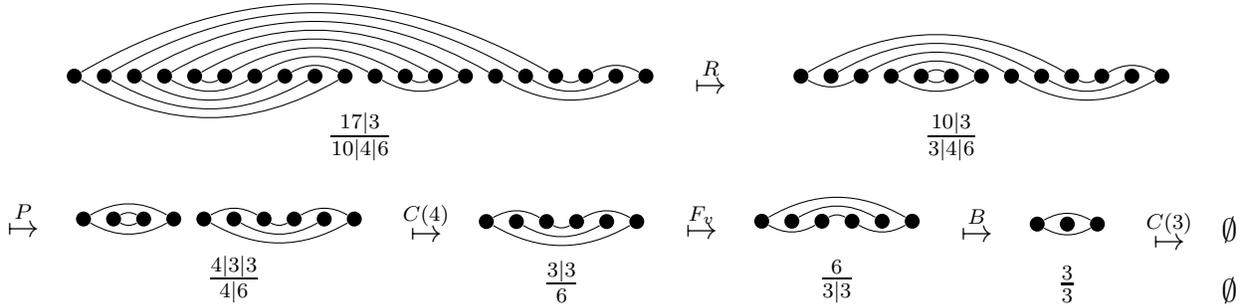
\end{example}

\noindent
In what follows, it is helpful to add a sixth 
index preserving transformation, $F_h$, called a \textit{horizontal flip} which takes $M$ to $\dfrac{a_m|...|a_2|a_1}{b_m|...|b_2|b_1}$.

\section{The index of a partition}\label{sect:results}
Let $\mathcal{P}(n)$ be the set of integer partitions of a positive integer $n$ and let $\lambda, \mu \in \mathcal{P}(n)$ with 
$\lambda = (\lambda_1, \lambda_2, \dots, \lambda_m)$ and 
$\mu = (\mu_1, \mu_2, \dots, \mu_t).$  
These compositions can be used to define the seaweed 
$$\mathfrak{p}(\lambda, \mu)=\frac{\lambda_1| \lambda_2| \dots| \lambda_m}{\mu_1| \mu_2| \dots| \mu_t}.$$
We can then define the \textit{index} of the pair $(\lambda, \mu)$ to be the index of $\mathfrak{p}(\lambda, \mu)$, and we write ind$_{\mu}(\lambda)$.  Given $\lambda$ as above, there are two choices for $\mu$ naturally associated with $\lambda$, namely, $w(\lambda)$ and its conjugate $1^{w(\lambda)}$. These yield, respectively, two partition statistics on $\lambda$ defined as
follows:

\begin{eqnarray}\label{index}
\textrm{ind}_{w(\lambda)}(\lambda) =\textrm{ind}\left(\frac{\lambda_1|\hdots|\lambda_m}{w(\lambda)}\right)~~~\textrm{and}~~~ 
\textrm{ind}_{1^{w(\lambda)}}(\lambda)=\textrm{ind}\left(\frac{\lambda_1|\hdots|\lambda_m}{1|\hdots|1}\right).
\end{eqnarray}

In the first seaweed, the bottom composition is defined by the trivial partition, yielding a maximal parabolic seaweed.  In the second case,
the bottom composition consists of $w(\lambda)$ $1's$.

\begin{example} Let $\lambda=(3,2,1)$ in (\ref{index}).  An application of Theorem 3 now yields

$$
\textrm{ind}_6(\lambda) =\textrm{ind}\left(\frac{3|2|1}{6}\right)=0~~~\textrm{and}~~~ \textrm{ind}_{1^6}(\lambda)=\textrm{ind}\left(\frac{3|2|1}{1|1|1|1|1|1}\right)=3.
$$
\end{example}


\subsection{All 1's}\label{sect:1s}

In this section we investigate, for fixed $i$ and varying $n$, the sequence of values defined by the number of partitions $\lambda\in\mathcal{P}(n)$ such that $\ind_{1^{w(\lambda)}}(\lambda)=n-i$. We find that for each $i$, if $c^i_n=|\{\lambda\in\mathcal{P}(n)|\ind_{1^{w(\lambda)}}(\lambda)=n-i\}|$, then $\{c_n^i\}_{n=i}^{\infty}$ is eventually constant, converging to a well-known classical value $c^i$ (see Theorem~\ref{thm:ones}). The following Table 1, illustrates $c_n^i$ for small values of $n$ and $i$.

\begin{table}[H]
\centering
\begin{tabular}{|c|c|c|c|c|c|c|c|c|c|c|}
\hline
$\boldsymbol{n}\backslash\boldsymbol{i}$ & \textbf{0}  & \textbf{1}  & \textbf{2}  & \textbf{3} & \textbf{4} & \textbf{5} & \textbf{6} & \textbf{7} & \textbf{8} & \textbf{9}  \\ \hline
\textbf{1}        & 1  & 0  & 0  & 0 & 0 & 0 & 0 & 0 & 0 & 0   \\ \hline
\textbf{2}        & 1  & 1  & 0  & 0 & 0 & 0 & 0 & 0 & 0 & 0  \\ \hline
\textbf{3}        & 0  & 2  & 1  & 0 & 0 & 0 & 0 & 0 & 0 & 0  \\ \hline
\textbf{4}        & 0 & 2  & 2  & 1 & 0 & 0 & 0 & 0 & 0 & 0   \\ \hline
\textbf{5}        & 0  & 0  & 4  & 2 & 1 & 0 & 0 & 0 & 0 & 0   \\ \hline
\textbf{6}       & 0  & 0  & 3  & 5 & 2 & 1 & 0 & 0 & 0 & 0   \\ \hline
\textbf{7}        & 0 & 0 & 0  & 7 & 5 & 2 & 1 & 0 & 0 & 0   \\ \hline
\textbf{8}        & 0 & 0  & 0 & 5 & 9 & 5 & 2 & 1 & 0 & 0   \\ \hline
\textbf{9}       & 0 & 0 & 0  & 0 & 12 & 10 & 5 & 2 & 1 & 0   \\ \hline
\textbf{10}       & 0 & 0  & 0  & 0 & 7 & 17 & 10 & 5 & 2 & 1  \\ \hline
\end{tabular}
\caption{Number of $\lambda\in\mathcal{P}(n)$ with $\ind_{1^{w(\lambda)}}(\lambda)=i$.}\label{tab:1s}
\end{table}

By coloring partitions, we can better understand the $c^i$'s. 
We will use two colors (red and blue), to color the parts of a given  partition. When enumerating colored partitions, we will assume that two partitions which are identical, save for their coloring, will be considered different partitions. So, for example the partition of the integer 2 given by $(\bl{1},\bl{1})$ is different from the partition of the integer 2 given by $(\bl{1},\rn{1})$. We also tacitly assume that in a given colored partition all blue parts of a given size precede all red parts of the same size. See Example 9.  

\bigskip
\noindent
\textit{Remark: } In the classical literature such (two)-colored partitions are called partitions into parts of two \textit{kinds}.  Partitions into two kinds can be found in Guptas' (\textbf{\cite{gupta}},1958) and have recently been connected to other objects as diverse as quandles \textbf{\cite{quandle2,quandle1}}. 

\begin{example}
The (two)-colored partitions of 2 are: (\rn{2}), (\bl{2}), (\rn{1},\rn{1}), (\bl{1},\bl{1}), and (\bl{1},\rn{1}).   
\end{example}

\noindent
The generating function for the number of (two)-colored partitions of $n$ is well-known and is equal to 

$$\prod_{m\ge 1}\frac{1}{(1-x^m)^2}.$$


The following theorem connects the current exposition to classical partition theory.

\begin{theorem}\label{thm:ones}
$$\sum_{i\ge 1}c^ix^{i-1}=\prod_{m\ge 1}\frac{1}{(1-x^m)^2}.$$
\end{theorem}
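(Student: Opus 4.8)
The plan is to first determine the statistic $\ind_{1^{w(\lambda)}}(\lambda)$ exactly from its meander, then reinterpret $c_n^i$ combinatorially and pass to the limit via a generating-function computation. For $\lambda=(\lambda_1,\dots,\lambda_m)\in\P(n)$ I would draw the meander of $\frac{\lambda_1|\cdots|\lambda_m}{1|\cdots|1}$. Since every bottom block has size one, there are \emph{no} bottom edges; all edges are the nested top arcs inside each block. Within the block of size $\lambda_j$ these arcs pair the first vertex with the last, the second with the second-to-last, and so on, yielding exactly $\lfloor\lambda_j/2\rfloor$ disjoint edges (plus one unmatched middle vertex when $\lambda_j$ is odd). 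Thus the meander is a disjoint union of single edges and isolated vertices: it has no cycles, so $C=0$, and its number of path-components is $P=n-\sum_j\lfloor\lambda_j/2\rfloor$, because each of the $\sum_j\lfloor\lambda_j/2\rfloor$ edges merges two of the $n$ vertices. Theorem~\ref{thm:dk} then gives
$$\ind_{1^{w(\lambda)}}(\lambda)=2C+P-1=n-1-\sum_j\lfloor\lambda_j/2\rfloor,$$
consistent with the earlier value $\ind_{1^6}(3,2,1)=3$.

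Consequently $\ind_{1^{w(\lambda)}}(\lambda)=n-i$ holds if and only if $\sum_j\lfloor\lambda_j/2\rfloor=i-1$, so that $c_n^i=\bigl|\{\lambda\in\P(n):\sum_j\lfloor\lambda_j/2\rfloor=i-1\}\bigr|$. The key observation I would exploit is that a part equal to $1$ contributes $0$ to this statistic, whereas every part $\ge 2$ contributes at least $1$. I would therefore split each $\lambda$ into its subpartition $\mu$ of parts $\ge 2$ together with some number of $1$'s: the constraint depends only on $\mu$, and since each part of $\mu$ is $\ge 2$ one has $s\le 3\lfloor s/2\rfloor$, whence $w(\mu)\le 3\sum_k\lfloor\mu_k/2\rfloor=3(i-1)$. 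Hence only finitely many $\mu$ occur, with total size bounded independently of $n$.

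To take the limit, note that for $n\ge 3(i-1)$ each admissible $\mu$ extends uniquely to a partition of $n$ by appending $n-w(\mu)$ ones, a bijection showing that $c_n^i$ is eventually constant, equal to $c^i:=\bigl|\{\mu:\text{all parts}\ge 2,\ \sum_k\lfloor\mu_k/2\rfloor=i-1\}\bigr|$. I would then finish with the generating function: weighting each partition with parts $\ge 2$ by $x^{\sum_k\lfloor\mu_k/2\rfloor}$ gives
$$\sum_{i\ge 1}c^i x^{i-1}=\prod_{s\ge 2}\frac{1}{1-x^{\lfloor s/2\rfloor}}.$$
Grouping the part sizes $s$ by their floor value $m=\lfloor s/2\rfloor$, each $m\ge 1$ arises from exactly the two sizes $s=2m$ and $s=2m+1$, so the product collapses to $\prod_{m\ge 1}(1-x^m)^{-2}$, which is precisely the two-colored-partition generating function. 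This both proves the identity and identifies $c^i$ as the number of partitions of $i-1$ into parts of two kinds.

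I expect the crux to be the meander computation yielding $\ind_{1^{w(\lambda)}}(\lambda)=n-1-\sum_j\lfloor\lambda_j/2\rfloor$; once that is in hand, everything else is bookkeeping. The one point that still requires care is justifying genuine stabilization of the coefficient, equivalently that the $1$'s contribute a free factor $1/(1-q)$ in the two-variable generating function $\prod_{s\ge 1}(1-q^s x^{\lfloor s/2\rfloor})^{-1}$, so that for each fixed power of $x$ the $q$-coefficients are partial sums of a polynomial and hence constant for large $n$.
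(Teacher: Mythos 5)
Your proposal is correct and follows essentially the same route as the paper: both reduce the statistic to ``$n$ minus the number of top arcs minus $1$,'' strip the parts equal to $1$ to get stabilization for $n\ge 3(i-1)$, and then match partitions into parts $\ge 2$ with two-colored partitions of $i-1$ via the parity of each part. Your final step, collapsing $\prod_{s\ge 2}(1-x^{\lfloor s/2\rfloor})^{-1}$ by grouping $s=2m$ and $s=2m+1$, is exactly the paper's explicit bijection $\lambda_i\mapsto 2\lambda_i$ (red) or $2\lambda_i+1$ (blue) written in generating-function form.
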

\begin{proof}
The case $i=1$ is clear since, by Theorem~\ref{thm:dk}, the only $\lambda\in\mathcal{P}(n)$ with $\ind_{1^{w(\lambda)}}(\lambda)=n-1$ is $\lambda=1^n$. We show that for $i>1$ and $n\ge 3i-3$, there is a bijective correspondence between $\mathcal{M}(i,n)=\{\lambda\in\mathcal{P}(n)~: ~\ind_{1^{w(\lambda)}}(\lambda)=n-i\}$ and $\mathcal{P}^2(i-1)=\{\text{colored partitions of }i-1\}$. We do this in two steps.

First, let $\mathcal{M}(i-1)$ 
be the set of partitions $\mu=(\mu_1,\hdots,\mu_m)$ such that $\mu_m>1$ and the meander corresponding to $\mathfrak{p}(\mu,1^{w(\mu)})$ has $i-1$ arcs. Consider the map $\varphi$ which takes $\lambda=(\lambda_1,\hdots,\lambda_m)\in \mathcal{P}^2(i-1)$ to the partition $\varphi(\lambda)=(\mu_1,\hdots,\mu_m)$ defined by 

\[\mu_i = \begin{cases} 
     2\lambda_i+1, & \lambda_i\text{ is blue}; \\
     2\lambda_i, & \lambda_i\text{ is red.} 
   \end{cases}
\] 

\noindent
By construction, $\varphi(\lambda)\in \mathcal{M}(i-1)$. Furthermore, $\varphi$ is invertible so $\varphi$ is a bijection between $\mathcal{P}^2(i-1)$ and $\mathcal{M}(i-1)$. Via this correspondence, it is easy to see that the largest partition of $\mathcal{M}(i-1)$ has weight $3i-3$.

 By Theorem~\ref{thm:dk}, $\ind_{1^{w(\lambda)}}(\lambda)$ corresponds to $n$ minus the number of arcs in the meander minus 1. Thus, $\mathcal{M}(i,n)$ consists of partitions $\lambda\in\mathcal{P}(n)$ such that the meander corresponding to $\mathfrak{p}(\lambda,1^{w(\lambda)})$ has exactly $i-1$ arcs. Therefore, if $n\ge 3i-3$, then elements of $\lambda\in \mathcal{M}(i,n)$ can be mapped bijectively to elements of $\mathcal{M}(i-1)$ by the map $\psi$ which removes all parts equal to 1. The required bijection is  given by $\psi \circ \varphi$.  \end{proof}

\subsection{The maximal parabolic case}\label{sect:n}
As above, let $\lambda = (\lambda_1, \dots, \lambda_m)$ be an element of $\mathcal{P}(n)$.
In this section, we consider the seaweed defined by the pair of compositions $(\lambda, w(\lambda))$. In contrast to the previous section, here we investigate the number of partitions $\lambda$ such that $\ind_{w(\lambda)}(\lambda)=0$.
We naturally call such partitions, \textit{Frobenius partitions}. The main theorem of this section, Theorem~\ref{thm:period}, remarkably establishes that if $\lambda_i\leq 7$, for $i=1,\dots,m$, then the number of Frobenius partitions is a periodic function of $n$. 
 
We begin with two Lemmas which will be helpful in the proof of Theorem~\ref{thm:period}.

\begin{lemma}\label{lem:sum}
Let $\mathfrak{g}=\frac{a_1|\hdots|a_m}{\sum_{i=1}^ma_i}$ be a seaweed algebra. If there exists $i<j-1$ such that $\sum_{l=1}^ia_l=\sum_{l=j}^ma_l$, then $\mathfrak{g}$ is not Frobenius.
\end{lemma}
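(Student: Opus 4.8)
The plan is to argue through the Dergachev--Kirillov count (Theorem~\ref{thm:dk}). Since a Frobenius seaweed has $\ind(\mathfrak{g}) = 2C + P - 1 = 0$, forcing $C = 0$ and $P = 1$, its meander must be a single path and in particular \emph{connected}. It therefore suffices to show that the hypothesis forces the meander of $\mathfrak{g}$ to be disconnected: then the number of components $C + P \ge 2$, so $2C + P \ge 2$ and $\ind(\mathfrak{g}) \ge 1 > 0$.

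First I would record the shape of the meander of the maximal parabolic seaweed $\frac{a_1|\cdots|a_m}{n}$, where $n = \sum_l a_l$. Writing $s_k = a_1 + \cdots + a_k$ for the partial sums (with $s_0 = 0$), the top arcs are nested inside the blocks $\{v_{s_{k-1}+1},\dots,v_{s_k}\}$ and never cross a block boundary, while the single bottom block of size $n$ produces the nested rainbow pairing $v_k \leftrightarrow v_{n+1-k}$ for all $k$. The hypothesis $\sum_{l=1}^i a_l = \sum_{l=j}^m a_l$ translates cleanly into $s_i + s_{j-1} = n$, and the constraint $i < j-1$ gives $s_{j-1} - s_i = a_{i+1}+\cdots+a_{j-1} \ge 1$, so $s_i < s_{j-1}$. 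Consequently the three vertex sets $L = \{v_1,\dots,v_{s_i}\}$, $I = \{v_{s_i+1},\dots,v_{s_{j-1}}\}$, and $R = \{v_{s_{j-1}+1},\dots,v_n\}$ partition the $n$ vertices, and all three are nonempty (note $|R| = n - s_{j-1} = s_i = |L| \ge 1$ by the hypothesis).

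Next I would verify that no edge of the meander joins $I$ to $L \cup R$. For top edges this is immediate, since each of $L$, $I$, $R$ is a union of whole top blocks and top arcs stay within a block. For the bottom edges the point is that the reflection $v_k \mapsto v_{n+1-k}$ preserves $I$: for $s_i + 1 \le k \le s_{j-1}$, the identity $s_i + s_{j-1} = n$ yields $s_i + 1 \le n+1-k \le s_{j-1}$, so the bottom partner of an interior vertex is again interior (and symmetrically $L$ and $R$ are interchanged). Hence $I$ is a nonempty union of connected components of the meander, disjoint from the nonempty set $L \cup R$; the meander is disconnected and the conclusion follows.

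The main obstacle, and the only place the hypothesis is genuinely used, is the bottom-edge step: one must confirm that $s_i + s_{j-1} = n$ is \emph{exactly} the condition making $I$ closed under the bottom reflection, so that the two halves truly separate. The single subtlety is the unpaired center vertex $v_{(n+1)/2}$ when $n$ is odd; here $s_i < n/2 < s_{j-1}$ places it inside $I$, so it creates no bridge between $I$ and $L \cup R$. Everything else is routine bookkeeping with the partial sums $s_k$.
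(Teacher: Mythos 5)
Your proof is correct, but it takes a different route from the paper's. The paper invokes the winding-down machinery of Lemma~\ref{lem:wd}: it applies a vertical flip followed by $i$ pure contractions to transform the meander into one of type $\frac{b_1|a_i|\cdots|a_1}{a_{i+1}|\cdots|a_m}$ with $b_1=\sum_{l=i+1}^{j-1}a_l$, which visibly splits into the two pieces $\frac{b_1}{a_{i+1}|\cdots|a_{j-1}}$ and $\frac{a_i|\cdots|a_1}{a_j|\cdots|a_m}$, whence the index is positive. You instead argue directly on the original meander: since the top arcs never cross block boundaries and the single bottom block induces the rainbow $v_k\leftrightarrow v_{n+1-k}$, the identity $s_i+s_{j-1}=n$ makes the interior segment $I=\{v_{s_i+1},\dots,v_{s_{j-1}}\}$ closed under both edge types while $L$ and $R$ are swapped by the bottom reflection, so $I$ is a nonempty union of components separate from $L\cup R$ and the meander is disconnected. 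The two arguments locate the same disconnection, but yours is more self-contained -- it needs only the meander construction and Theorem~\ref{thm:dk}, not the correctness or applicability of the winding moves (the paper's chain of $i$ pure contractions tacitly requires the relevant inequalities to hold at each step, which your argument sidesteps entirely) -- while the paper's version fits the lemma into the same move-based framework it reuses heavily in the proof of Theorem~\ref{thm:period}. Your handling of the odd-$n$ center vertex and the implicit nonemptiness of $L$ (which forces $i\ge 1$, since $i=0$ would make the right-hand sum vanish) is careful and correct.
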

\begin{proof}
Applying the winding moves $(F)$ followed by $i$ applications of $(P)$ to the meander corresponding to $\mathfrak{g}$ results in the meander corresponding to the seaweed algebra of type $\frac{b_1|a_i|\hdots|a_1}{a_{i+1}|\hdots|a_j|\hdots|a_m}$ where $b_1=\sum_{l=i+1}^{j-1}a_l>0$; but this meander consists of at least two components, one corresponding to $\frac{b_1}{a_{i+1}|\hdots|a_{j-1}}$, and the other $\frac{a_i|\hdots|a_1}{a_j|\hdots|a_m}$. Thus, by Theorem~\ref{thm:dk}, $\ind(\mathfrak{g})>0.$
\end{proof}

\begin{lemma}\label{lem:odd}
Let $\mathfrak{g}=\frac{a_1|\hdots|a_m}{\sum_{i=1}^ma_i}$ be a seaweed algebra. If there exists more than two odd $a_i$'s, then $\mathfrak{g}$ is not Frobenius.
\end{lemma}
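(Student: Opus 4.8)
The plan is to bypass the winding-down machinery and read the number of path components directly off the degree sequence of the meander, exploiting the fact that a maximal parabolic seaweed has an especially rigid bottom. By Theorem~\ref{thm:dk}, $\mathfrak{g}$ is Frobenius exactly when its meander has no cycles and exactly one path, so it suffices to show that having more than two odd $a_i$'s forces at least two path components.

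First I would record the degree structure. In the meander of $\frac{a_1|\cdots|a_m}{n}$ with $n=\sum_{i=1}^m a_i$, every vertex carries at most one top edge and at most one bottom edge, so every vertex has degree $0$, $1$, or $2$. A vertex fails to receive a top edge precisely when it is the central vertex of a top block of odd size; hence the number of top-unmatched vertices equals $k$, the number of odd $a_i$'s. Since the bottom consists of the single block of size $n$, a vertex fails to receive a bottom edge precisely when $n$ is odd and the vertex is the central one, so the number of bottom-unmatched vertices is $b:=n \bmod 2$. Note that $b\equiv k \pmod 2$, since $n$ and $k$ have the same parity.

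The key step is a defect count. Assign to each vertex its \emph{defect} $2-\deg$, the number of arc-slots it is missing; the total defect over all vertices is then $k+b$. Since the meander is a disjoint union of paths and cycles, I would tally the defect component by component: every vertex of a cycle has degree $2$ and contributes defect $0$; a path with at least one edge contributes defect $1+1=2$ through its two endpoints, its internal vertices being degree $2$; and a trivial path (an isolated vertex) contributes defect $2$ all by itself. Thus \emph{every} path contributes exactly $2$ to the total defect while every cycle contributes $0$, giving the clean identity $P=(k+b)/2$, where $P$ denotes the number of paths.

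Finally I would conclude. If there are more than two odd parts, then $k\ge 3$, and since $k+b$ is even we get $k+b\ge 4$, so $P\ge 2$. By Theorem~\ref{thm:dk}, $\ind(\mathfrak{g})=2C+P-1\ge P-1\ge 1>0$, whence $\mathfrak{g}$ is not Frobenius. The one point demanding care -- and the main obstacle -- is the bookkeeping around a possible overlap: when $n$ is odd the central vertex may simultaneously be the center of an odd top block, making it degree $0$. The defect formulation disposes of this automatically, since such a vertex simply contributes defect $2$ as a trivial path. The routine checks underlying the count are that an odd block has exactly one unmatched (central) vertex while an even block has none, which follows from the adjacency rule $j+k=2(a_1+\cdots+a_{i-1})+a_i+1$ for the top edges.
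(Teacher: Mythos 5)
Your proof is correct and follows essentially the same route as the paper's: both arguments count the vertices left unmatched by odd top blocks and observe that each path component of the meander accounts for exactly two such deficiencies, forcing $P\ge 2$ and hence $\ind(\mathfrak{g})\ge 1$ by Theorem~\ref{thm:dk}. Your defect bookkeeping is in fact slightly more careful than the paper's one-line version, which speaks only of degree-$1$ vertices and silently elides the isolated-vertex case (an odd top block's central vertex coinciding with the central vertex of an odd bottom block) that you handle explicitly.
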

\begin{proof}
Each odd $a_i$ contributes a vertex of degree 1 to the meander corresponding to $\mathfrak{g}$. Recall that each open path consists of exactly two vertices of degree 1 and no closed paths contains a vertex of degree 1. So, if there are more than two odd $a_i$'s, then the corresponding meander must contain more than one open path and thus, by Theorem~\ref{thm:dk}, $\ind(\mathfrak{g})>0.$
\end{proof}

Let $\mathcal{P}(n,d)$ be the set of Frobenius partitions $\lambda=(\lambda_1,\hdots,\lambda_m)\in\mathcal{P}(n)$ such that $\lambda_i\le d$ for $1\le i\le m$.

\begin{theorem}\label{thm:period}
If $d\in\{1,2,3,4\}$, then the values of $|\mathcal{P}(n,d)|$ are eventually periodic. More precisely, 
\begin{itemize}
	\item If $n\ge 3$, $|\mathcal{P}(n,1)|=0$
    \item If $n\ge 5$, \[|\mathcal{P}(n,2)| = \begin{cases} 
     1, & n\text{ odd} \\
     0, & n\text{ even}
   \end{cases}
\] 
	\item If $n\ge 13$, \[|\mathcal{P}(n,3)| = \begin{cases} 
     2, & n\text{ odd} \\
     0, & n\text{ even}
   \end{cases}
\] 
	\item If $n\ge 17$, \[|\mathcal{P}(n,4)| = \begin{cases} 
     4, & n\equiv 1(\text{mod }4) \\
     2, & n\equiv 2(\text{mod }4) \\
     3, & n\equiv 3(\text{mod }4) \\
     0, & n\equiv 0(\text{mod }4) \\
   \end{cases}
\] 
\end{itemize}
\end{theorem}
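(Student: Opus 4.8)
The plan is to treat the four values of $d$ by a common strategy: use the structural Lemmas~\ref{lem:odd} and~\ref{lem:sum} to cut the possible shapes of a Frobenius partition with parts at most $d$ down to a tractable family, then apply the winding-down moves of Lemma~\ref{lem:wd} (together with the closed formulas of Theorems~\ref{2parts} and~\ref{3parts} for the terminal small seaweeds) to decide exactly which members of that family are Frobenius, and finally read off the count as a function of a residue of $n$.

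First I would record the constraints forced by the two lemmas. By Lemma~\ref{lem:odd} a Frobenius partition has at most two odd parts; since the only available odd parts are $1$ and $3$, the multiset of odd parts lies in $\{\emptyset,(1),(1,1),(3),(3,1),(3,3)\}$, and its total weight shares the parity of $n$. Every remaining part is even, so a candidate Frobenius partition consists of the even parts $4^{a}2^{b}$ together with at most two odd parts. Parity considerations then dispatch many of the prefix-equals-suffix collisions of Lemma~\ref{lem:sum} as soon as an odd part is present (odd suffix sums cannot meet the even prefix sums), so that the decisive remaining obstruction to being Frobenius is a genuine index computation rather than Lemma~\ref{lem:sum} alone.

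Next I would carry out that index computation by winding down the maximal-parabolic meander $\frac{\lambda_1|\cdots|\lambda_m}{n}$. Because the bottom is the single block $n$, the reduction always opens with a vertical flip $F_v$ followed by repeated pure contractions $P$, which peel the large block off against the parts; thereafter the reduction settles into a repeating regime in which a copy of the largest even part is absorbed and $n$ drops by that part's value. The heart of the argument is to show this regime is genuinely periodic: inserting one further copy of the largest even part that occurs with unbounded multiplicity (a $2$ when $d\in\{2,3\}$, a $4$ when $d=4$) returns the reduction to an equivalent state and preserves the index. This produces an index-preserving, weight-shifting correspondence $\mathcal{P}(n,d)\leftrightarrow\mathcal{P}(n+L,d)$ for all large $n$, with $L=2$ for $d\in\{2,3\}$ and $L=4$ for $d=4$, which is exactly the claimed period. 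A finite check inside one period then fixes the counts: for $d=1$ only $1^{n}$ survives, and it fails for $n\ge 3$ by Lemma~\ref{lem:odd}; for $d=2$ the unique survivor for odd $n$ is $2^{(n-1)/2}1$; for $d=3$ two shapes survive for odd $n$; and for $d=4$ the residues modulo $4$ yield $4,2,3,0$ survivors, with the threshold $n\ge 17$ accounting for the last sporadic survivor (such as $4^{4}1$) before the pattern locks in.

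The main obstacle is the third step. Since parts are bounded, a Frobenius partition of large weight is forced to have unboundedly many parts, so one cannot merely invoke the three-part formula but must control an arbitrarily long winding-down. The delicate point is the bookkeeping of the Component Elimination move $C$, the only move that changes the index: one must prove that along the reduction no spurious cycle is ever created (which would push the index above $0$) and that the number of index-lowering eliminations comes out exactly right, uniformly in the number of parts. Establishing the periodic regime rigorously—that adding one more copy of the largest unbounded even part is an index-preserving operation once past the initial transient—is where the real work lies; with that in hand, both the periodicity and the per-residue counts follow from a finite verification within a single period.
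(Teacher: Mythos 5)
Your overall strategy coincides with the paper's: use Lemma~\ref{lem:sum} and Lemma~\ref{lem:odd} to cut the candidates down to a short list of shapes with at most two odd parts, decide each surviving shape by winding down its meander, and tally survivors by residue class. The final counts you quote are also the correct ones.

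The problem is that the decisive step is not actually carried out, and you say so yourself: the claim that appending one more copy of the largest unboundedly-repeated even part is an index-preserving operation (so that $\mathcal{P}(n,d)$ and $\mathcal{P}(n+L,d)$ correspond for large $n$) is asserted, flagged as ``where the real work lies,'' and then used. As stated it is essentially a restatement of the periodicity to be proved. Moreover, a correspondence defined merely by ``add a $4$'' cannot by itself separate the Frobenius families from the non-Frobenius ones, because for $d=4$ there are shapes that survive both lemmas \emph{and} your parity screen yet are not Frobenius: $(2^14^{f_4})$ has prefix sums $\equiv 0$ and suffix sums $\equiv 2 \pmod 4$, so Lemma~\ref{lem:sum} never applies, but its meander winds down to a $\frac{2}{2}$ component of index $1$; likewise $(1^12^{f_2}4^{f_4})$ with $f_2\ge 1$ evades both lemmas but winds down to a disconnected meander. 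Distinguishing these from the genuinely Frobenius families $(3^14^{f_4})$, $(3^24^{f_4})$, $(2^13^14^{f_4})$, $(1^14^{f_4})$, $(1^24^{f_4})$ is exactly the content of the theorem. The paper closes this gap concretely: it lists the seven surviving shapes for $d=4$, exhibits for each an explicit finite sequence of moves from Lemma~\ref{lem:wd} reducing it either to an already-settled Frobenius family or to a meander of positive index, and handles the infinite Frobenius families $(1^12^{f_2})$, $(2^{f_2}3^1)$, $(3^14^{f_4})$, $(1^14^{f_4})$ by citing an external index formula (Theorem 10 of \cite{Coll2}) rather than proving a weight-shift invariance from scratch. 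Your plan could be completed along the same lines, but the case-by-case meander reductions---precisely the part you defer---are the proof.
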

\begin{proof} The proof heuristic is described as follows. We consider the possible partitions for each $d\le 4$ -- except for those cases considered in Lemma~\ref{lem:sum} and Lemma~\ref{lem:odd} -- in reverse lexicographic ordering and determine which partitions are Frobenius.

$\mathbf{d=1}$: $\ind(\frac{1|\hdots|1}{n})=\lfloor\frac{n}{2}\rfloor>0$ for $n\ge 3$. Thus, for $n\ge 3$ we have $|\mathcal{P}(n,1)|=0$.

$\mathbf{d=2,n\ge 5}$: Using the results determined for the case $d=1$, we consider only partitions with $\lambda_1=2$. After applying Lemma~\ref{lem:sum} and Lemma~\ref{lem:odd}, the only partitions remaining are those of the form $(1^12^{f_2})$, which are Frobenius by Theorem 10 of \textbf{\cite{Coll2}}. Thus, there is exactly one such Frobenius partition if and only if the weight is odd.

$\mathbf{d=3,n\ge 13}$: As before, using the results for the cases $d=1$ and $d=2$, we can restrict our attention to partitions with $\lambda_1=3$. After applying Lemma~\ref{lem:sum} and Lemma~\ref{lem:odd}, the only partitions remaining are those of the form $(2^{f_2}3^1)$, which are Frobenius, once again, by Theorem 10 of \textbf{\cite{Coll2}}. Thus, as in the case $d=2$, there is exactly one such Frobenius partition if and only if the weight is odd.

$\mathbf{d=4,n\ge 17}$: Finally, using the results for $d=1,2,3$, we consider only partitions with $\lambda_1=4$. After applying Lemma~\ref{lem:sum} and Lemma~\ref{lem:odd}, we are left with seven cases to consider.
\begin{enumerate}
    \item $(3^14^{f_4})$: Partitions of this form are Frobenius by Theorem 10 of \textbf{\cite{Coll2}}.
    \item $(3^24^{f_4})$: Applying the sequence of moves $(F_v),(P),(F_h),(R),(R),(B),(F_v),(P),(F_h)$ to the corresponding meander results in the meander for a partition of the form $(1^1f^{f_4})$, which is found to be Frobenius in case 5 below.
    \item $(2^14^{f_4})$: Applying the sequence of moves $(F_h),(F_v),(P),(F_v),(F_h),(B)$ inductively to the corresponding meander results in the meander for the seaweed algebra of type $\frac{2}{2}$, which has index 1.
    \item $(2^13^14^{f_4})$: Applying the sequence of moves $(F_v),(P),(F_h),(B),(F_v),(R),(B),(F_h)$ to the corresponding meander results in the meander for a partition of the form $(1^14^{f_4})$, which is found to be Frobenius in case 5 below.
    \item $(1^14^{f_4})$: Partitions of this form are Frobenius by Theorem 10 of \textbf{\cite{Coll2}}.
    \item $(1^12^{f_2}4^{f_4}),f_2\ge 1$: Applying the sequence of moves $(F_v),(P),(F_h),(P)$ to the corresponding meander results in a seaweed algebra of type $\frac{2|1|n-8}{2|\hdots|2|4|\hdots|4}$, which splits into at least two components, $\frac{2}{2}$ and $\frac{1|n-8}{2|\hdots|2|4|\hdots|4}$, and is therefore not Frobenius by Theorem~\ref{thm:dk}.
    \item $(1^24^{f_4})$: Applying the sequence of moves $(F_h)(F_v)(P)(P)(F_v)(F_h)(P)(B)(F_h)$ to the corresponding meander results in the meander for a partition of the form $(1^14^{f_4})$, which was found to be Frobenius in case 5 above.
\end{enumerate}
Thus, there are two Frobenius partitions with weight congruent to 1(mod 4); two with weight congruent to 2(mod 4); and one with weight congruent to 3(mod 4).
\end{proof}

\noindent
\textit{Remark:} Similar methods to those used in the proof of Theorem~\ref{thm:period} can be used to establish periodic behavior for $d\in\{5,6,7\}$. In the case of $d=5$, the period is of length 4 -- with values 7,3,5,3 -- while in the cases of $d\in\{6,7\}$, the period jumps to 14.  See Example~\ref{ex:period}.

\begin{example}\label{ex:period}
The sequence of values of $|\mathcal{P}(n,d)|$ for $d\in\{5,6,7\}$, along with the value of $n$ at which $|\mathcal{P}(n,d)|$ becomes periodic. 
$$d=5,n\ge 21:\text{ }7,3,5,3$$
$$d=6,n\ge 37:\text{ }14,5,9,3,11,5,11,3,12,5,8,3$$
$$d=7,n\ge 41:\text{ }19,9,18,7,19,9,17,7,20,9,17,7$$
\end{example}

\noindent
\textit{Remark:}
At $d=8$ the periodicity stops, which can be seen by considering $|\mathcal{P}(n,8)|$ for $n\equiv 1(\text{mod }8)$ where for $n=8m+1$ we have that $\ind_{w(\lambda)}(1^14^{2(m-k)}8^k)=0$.
\\*


\section{Conclusion}
Using the index theory of seaweed algebras we advance the notion of the index of a partition pair, by simply defining the index of the latter to be the index of the former.  This rather pedestrian definition allows us to describe various statistics on integer partitions.  We consider the two extremal cases defined by $(\lambda, 1^{w(\lambda)})$ and $(\lambda, w(\lambda))$.  But what about other $\lambda$-based choices for the second composition?    
 For example one could pair a partition $\lambda=(\lambda_1,\hdots,\lambda_m)$ with its reverse $\text{Rev}(\lambda)=(\lambda_m,\hdots,\lambda_1)$. It follows from a result of \textbf{\cite{collcomp}} that $$|\{\lambda\in \mathcal{P}(n)~:~\ind_{\text{Rev}(\lambda)}(\lambda)=n-1\}|=d(n).$$ Alternatively, a partition can be paired with its conjugate. In this case, via the same result of \textbf{\cite{collcomp}}, we find that $|\{\lambda\in \mathcal{P}(n)~:~\ind_{\lambda^C}(\lambda)=n-1\}|$ is equal to twice the number of self-conjugate partitions of $n$. 
 
We might also consider incorporating weighted sums, such as those that appear in Euler's Pentagonal Number Theorem and other Legendre type theorems \textbf{\cite{legendre}}. In such results, partitions $\lambda$ contribute a term of the form $(-1)^{l(\lambda)}q^{w(\lambda)}$ to the weighted sum. One could instead insist that each partition contributes a term of the form $(-1)^{\ind_{w(\lambda)}(\lambda)}q^{w(\lambda)}$. For example, by restricting to partitions with only odd parts (denoted $\mathcal{P}(n,S_{odd})$) and considering the sets $$e_n=|\{\lambda\in \mathcal{P}(n,S_{odd})~:~\ind_{w(\lambda)}(\lambda)\text{ is even}\}|\quad o_n=|\{\lambda\in \mathcal{P}(n,S_{odd})~:~\ind_{w(\lambda)}(\lambda)\text{ is odd}\}|,$$  numerical data suggests the following interesting conjecture.

\begin{conjecture}\label{conj:ws}

\begin{eqnarray}\label{conjecture}
\sum_{n\ge 0}|e_n-o_n|q^n=\prod_{k\ge 1}\frac{1}{1+(-1)^kq^{2k-1}}.
\end{eqnarray}
\end{conjecture}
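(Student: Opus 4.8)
The plan is to turn the conjecture into a single nonnegativity statement by a chain of exact identities. The first step is to compute the parity of $\ind_{w(\lambda)}(\lambda)$ for $\lambda\in\mathcal{P}(n,S_{odd})$. The meander of $\mathfrak{p}(\lambda,w(\lambda))=\frac{\lambda_1|\cdots|\lambda_m}{n}$ is a disjoint union of paths and cycles (every vertex has degree at most $2$), so its number of path components is $P=(\#\text{vertices})-(\#\text{edges})=n-E$. The top blocks contribute $\sum_i\lfloor\lambda_i/2\rfloor=\frac{n-m}{2}$ edges and the single bottom block contributes $\lfloor n/2\rfloor$, so $E=\frac{n-m}{2}+\lfloor n/2\rfloor$ and a short check on the parity of $n$ gives $P=\lceil m/2\rceil$, independent of the cycle count $C$. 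Since $\ind=2C+P-1$ by Theorem~\ref{thm:dk}, we obtain $(-1)^{\ind_{w(\lambda)}(\lambda)}=(-1)^{P-1}=(-1)^{\lceil\ell(\lambda)/2\rceil-1}$, so the parity of the index depends only on the number of parts. Hence, for $n\ge1$, $e_n-o_n=\sum_{\lambda}(-1)^{\lceil\ell(\lambda)/2\rceil-1}$, summed over partitions of $n$ into odd parts.

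Next I would identify the right-hand side with a specialization of $F(q,t)=\prod_{j\ \mathrm{odd}}\frac{1}{1-tq^j}=\sum_{\lambda\ \mathrm{odd}}t^{\ell(\lambda)}q^{|\lambda|}$. Writing $\omega$ for a primitive fourth root of unity, the substitution $q\mapsto\omega q$, $t\mapsto\omega^{-1}$ sends the $j$-th factor to $\frac{1}{1-\omega^{\,j-1}q^j}$, which is $\frac{1}{1-q^j}$ when $j\equiv1\ (\mathrm{mod}\ 4)$ and $\frac{1}{1+q^j}$ when $j\equiv3\ (\mathrm{mod}\ 4)$; this is exactly $\prod_{k\ge1}\frac{1}{1+(-1)^kq^{2k-1}}$. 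Extracting coefficients gives $[q^n](\mathrm{RHS})=\sum_{\lambda\ \mathrm{odd},\,|\lambda|=n}\omega^{\,n-\ell(\lambda)}=\sum_{\lambda}(-1)^{(n-\ell(\lambda))/2}$, using $n\equiv\ell\pmod2$ and $\omega^2=-1$. Finally, the elementary congruence $(n-\ell)/2\equiv\lceil n/2\rceil+\lceil\ell/2\rceil\pmod 2$ (valid whenever $n\equiv\ell\pmod2$) collapses this to $[q^n](\mathrm{RHS})=(-1)^{\lceil n/2\rceil-1}(e_n-o_n)$. In particular $\big|[q^n](\mathrm{RHS})\big|=|e_n-o_n|$, so the conjecture is equivalent to the single assertion that the product $\prod_{k\ge1}\frac{1}{1+(-1)^kq^{2k-1}}$ has nonnegative coefficients.

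The crux is therefore this nonnegativity, which is where I expect the real difficulty to lie. Expanding, $[q^n](\mathrm{RHS})=\sum_{(\alpha,\beta)}(-1)^{\ell(\beta)}$ over pairs in which $\alpha$ has all parts $\equiv1\ (\mathrm{mod}\ 4)$, $\beta$ has all parts $\equiv3\ (\mathrm{mod}\ 4)$, and $|\alpha|+|\beta|=n$; one wants a sign-reversing involution on these pairs whose surviving configurations all carry sign $+1$, or equivalently a manifestly positive combinatorial model for the product. The natural local moves do not obviously work: parts $\equiv1$ and $\equiv3$ modulo $4$ cannot be interchanged by shifting a single cell, and ``add or delete a smallest part'' maps fail to respect the region cut out by $|\alpha|+|\beta|=n$. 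I would instead try to track the statistic $\sum_i(\lambda_i-1)/2 \bmod 2$ through Glaisher's odd-to-distinct-parts bijection, or to build a Franklin-type pairing acting only on the $\beta$ component, hoping the uncancelled terms are all counted with sign $+1$. I have checked the nonnegativity directly through $n=20$ (the coefficients begin $1,1,1,0,0,1,2,1,0,0,2,2,1,0,2,3,2,0,2,4,4$), so the statement is very likely true; isolating the correct involution is the one nonroutine step, the remainder being a finite verification once the identities above are in place.
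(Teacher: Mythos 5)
Your proposal is not a complete proof, and the gap is precisely where you locate it yourself: the nonnegativity of the coefficients of $\prod_{k\ge 1}\frac{1}{1+(-1)^kq^{2k-1}}$ is never established. Everything up to that point is correct and, in fact, goes beyond what the paper does. The statement is left as a conjecture in the paper, supported only by numerical data; the authors themselves observe that the coefficient sequence is OEIS A300574, whose nonnegativity is \emph{also} only conjectured there, and that the truth of the conjecture would establish that nonnegativity. Your meander computation (counting $P=n-E$ path components, getting $P=\lceil \ell(\lambda)/2\rceil$, hence $(-1)^{\ind_{w(\lambda)}(\lambda)}=(-1)^{\lceil\ell(\lambda)/2\rceil-1}$) is a clean and correct proof of the first of the two sub-conjectures into which the authors decompose Conjecture~\ref{conj:ws}, namely that the parity of $\ind_{w(\lambda)}(\lambda)$ for $\lambda\in\mathcal{P}(n,S_{odd})$ depends only on $\ell(\lambda)\bmod 4$. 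The root-of-unity specialization $q\mapsto\omega q$, $t\mapsto\omega^{-1}$ and the congruence $(n-\ell)/2\equiv\lceil n/2\rceil+\lceil\ell/2\rceil\pmod 2$ are also correct, and together they show $\bigl|[q^n](\mathrm{RHS})\bigr|=|e_n-o_n|$ for all $n$, so the conjecture is exactly equivalent to the nonnegativity of A300574 (which is essentially the content of the authors' second sub-conjecture, phrased as an inequality between counts of odd-part partitions with $\ell$ in prescribed residue classes mod $4$).

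The remaining step is genuinely open, not a finite verification: you need an uncancelled-term argument valid for \emph{all} $n$, and neither the Glaisher-bijection idea nor a Franklin-type involution on the $\beta$ component is carried out; checking coefficients through $n=20$ does not substitute for it. So the proposal should be read as a correct reduction of the conjecture to a known open nonnegativity problem, together with a proof of one of the paper's auxiliary conjectures, rather than as a proof of the statement.
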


\noindent
The sequence of coefficients of the product in (\ref{conjecture}) can be found on the Online Encyclopedia of Integer Sequences (``OEIS") as A300574, where this sequence is further conjectured to be nonnegative. If true, 
 Conjecture~\ref{conj:ws} would not only establish that the sequence A300574 is nonnegative but (\ref{conjecture}) would also provide a combinatorial interpretation.


\bibliographystyle{abbrv}

\bibliography{bibliography_partitions.bib}

\end{document}